\numberwithin{equation}{section}
\newtheorem{theorem}[equation]{Theorem} 
\newtheorem*{theorem*}{Theorem}
\newtheorem{lemma}[equation]{Lemma}
\newtheorem{proposition}[equation]{Proposition}
\newtheorem{corollary}[equation]{Corollary}
\newtheorem*{corollary*}{Corollary}
\theoremstyle{remark}
\newtheorem{definition}[equation]{Definition}
\newtheorem{example}[equation]{Example}
\theoremstyle{remark}
\newtheorem{remark}[equation]{Remark}
\newcommand{\cA}{{\mathcal A}}
\newcommand{\cB}{{\mathcal B}}
\newcommand{\cC}{{\mathcal C}}
\newcommand{\cD}{{\mathcal D}}
\newcommand{\cO}{{\mathcal O}}
\newcommand{\Spt}{\mathrm{Spt}}
\newcommand{\add}{\mathsf{add}}
\newcommand{\bbL}{\mathbb{L}}
\newcommand{\bbS}{\mathbb{S}}
\newcommand{\bbQ}{\mathbb{Q}}
\newcommand{\bbZ}{\mathbb{Z}}
\DeclareMathOperator{\id}{id}
\DeclareMathOperator{\Fun}{Fun} 
\newcommand{\bbK}{I\mspace{-6.mu}K}
\newcommand{\dgcat}{\mathsf{dgcat}}
\newcommand{\perf}{\mathrm{perf}}
\newcommand{\dg}{\mathsf{dg}}
\newcommand{\Hom}{\mathrm{Hom}}
\newcommand{\rep}{\mathrm{rep}}
\newcommand{\Ho}{\mathrm{Ho}}
\newcommand{\Hmo}{\mathsf{Hmo}}
\newcommand{\op}{\mathsf{op}}
\newcommand{\too}{\longrightarrow}
\newcommand{\ie}{\textsl{i.e.}\ }
\newcommand{\eg}{\textsl{e.g.}}
\begin{document}

\title[Galois descent of additive invariants]{Galois descent of additive invariants}

\author{Gon{\c c}alo~Tabuada}

\address{Gon{\c c}alo Tabuada, Department of Mathematics, MIT, Cambridge, MA 02139, USA}
\email{tabuada@math.mit.edu}
\urladdr{http://math.mit.edu/~tabuada/}
\thanks{The author was partially supported by the NEC Award-2742738}
\subjclass[2000]{12F10, 16E40, 16E45, 18D20, 19D55.}
\date{\today}

\keywords{Galois descent, additive invariants, noncommutative motives}

\abstract{Making use of the recent theory of noncommutative motives, we prove that every additive invariant satisfies Galois descent. Examples include mixed complexes, Hochschild homology, cyclic homology, periodic cyclic homology, negative cyclic homology, connective algebraic $K$-theory, mod-$l$ algebraic $K$-theory, nonconnective algebraic $K$-theory, homotopy algebraic $K$-theory, topological Hochscild homology, and topological cyclic homology.}}

\maketitle 
\vskip-\baselineskip
\vskip-\baselineskip
\vskip-\baselineskip
\section{Introduction}\label{sec:introduction}
\subsection*{Additive invariants}
A {\em differential graded (=dg) category} $\cA$, over a base commutative ring $k$, is a category enriched over complexes of $k$-modules; see \S\ref{sec:dg}. Let us denote by $\dgcat$ the category of (small) dg categories. Every (dg) $k$-algebra $A$ gives naturally rise to a dg category $\underline{A}$ with a single object and (dg) $k$-algebra of endomorphisms $A$. Another source of examples is provided by $k$-schemes since the derived category of perfect complexes $\cD_\perf(X)$ of every quasi-compact separated $k$-scheme $X$ admits a canonical dg enhancement\footnote{In the particular case where $k$ is a field and $X$ is quasi-projective this dg enhancement is unique; see Lunts-Orlov \cite[Thm.~2.12]{LO}.} $\cD_\perf^\dg(X)$; see Keller \cite[\S4.6]{ICM-Keller}.

Given a dg category $\cA$, let us denote by $T(\cA)$ the dg category of pairs $(i,x)$, where $i\in \{1,2\}$ and $x$ is an object of $\cA$. The complex of morphisms in $T(\cA)$ from $(i,x)$ to $(i',x')$ is given by $\cA(x,x')$ if $i \geq i'$ and is zero otherwise. Composition is induced by the composition operation in $\cA$. Intuitively speaking, $T(\cA)$ ``dg categorifies'' the notion of upper triangular matrix.  Note that we have two inclusion dg functors $i_1: \cA \hookrightarrow T(\cA)$ and $i_2: \cA \hookrightarrow T(\cA)$.
\begin{definition}\label{def:additive}
Let $E:\dgcat\to \mathsf{D}$ be a functor with values in an additive category. We say that $E$ is an {\em additive invariant} if it satisfies the following two conditions:
\begin{itemize}
\item[(i)] it sends {\em Morita morphisms} (see \S\ref{sec:dg}) to isomorphisms;
\item[(ii)] given any dg category $\cA$, the inclusion dg functors induce an isomorphism\footnote{Condition (ii) can be equivalently formulated in terms of semi-orthogonal decompositions in the sense of Bondal-Orlov; see \cite[Thm.~6.3(4)]{IMRN}.}
$$ [E(i_1)\,\,E(i_2)]:E(\cA) \oplus E(\cA) \stackrel{\sim}{\too} E(T(\cA))\,.$$
\end{itemize}
\end{definition}
Examples of additive invariants include:
\begin{itemize}
\item[(i)] The mixed complex functor $C:\dgcat \to \cD(\Lambda)$ with values in the derived category of the dg $k$-algebra $\Lambda:=k[B]/B^2$ where $B$ is of degree $-1$ and $d(B)=0$; see Keller \cite[\S1.5 Thm. b) and \S1.12]{Exact}.
\item[(ii)] The Hochschild homology functor $HH: \dgcat \to \cD(k)$ (with values in the derived category of $k$), the cyclic homology functor $HC:\dgcat \to \cD(k)$, the periodic cyclic homology functor $HP:\dgcat \to \cD_{\bbZ/2}(k)$ (with values in the category of $\bbZ/2$-graded cochain complexes of $k$-modules), and the negative cyclic homology functor $HN:\dgcat \to \cD(k)$; see Keller \cite[\S2.2]{Exact2}.
\item[(iii)] The connective algebraic $K$-theory functor $K:\dgcat \to \Ho(\Spt)$ with values in the homotopy category of spectra; see Thomason-Trobaugh \cite[Thm.~1.9.8]{TT} and Waldhausen \cite[Thm.~1.4.2]{Wald}.
\item[(iv)] The mod-$l$ algebraic $K$-theory functor $K(-;\bbZ/l): \dgcat \to \Ho(\Spt)$, with $l\geq 2$ an integer; see \S\ref{sec:K-ths}. 
\item[(v)] The nonconnective algebraic $K$-theory functor $\bbK:\dgcat \to \Ho(\Spt)$; see Schlichting \cite[\S7 Thm. 4 and \S12.3 Prop. 3]{Negative}.
\item[(vi)] The homotopy algebraic $K$-theory functor $KH:\dgcat \to \Ho(\Spt)$; see \S\ref{sec:K-ths}.
\item[(vii)] The topological Hochschild homology functor $THH:\dgcat \to \Ho(\Spt)$ and the topological cyclic homology functor $TC:\dgcat \to \Ho(\Spt)$; see Blumberg-Mandell \cite[Prop. 3.10 and Thm.~10.8]{BM} and \cite[Prop.~8.9]{MacLane}.
\end{itemize}
\begin{remark}
When applied to $\underline{A}$, respectively to $\cD_\perf^\dg(X)$, the above additive invariants (i)-(vii) reduce to the classical invariants of (dg) $k$-algebras, respectively of $k$-schemes; consult \cite[Thm.~5.2]{ICM-Keller} for (i)-(ii), \cite[Thm.~5.1]{ICM-Keller} for (iii) and (v), \cite[Example~2.13]{Products} for (iv), \cite[Prop.~2.3]{Fundamental} for (vi), and \cite[Thm.~1.3]{BM} for (vii).
\end{remark}
\subsection*{Galois descent}
Let $k \subset l$ be a $G$-Galois extension of commutative rings in the sense of Auslander-Goldman \cite{AG} and Chase-Harrison-Rosenberg \cite{chase}; see \cite[\S0 Def.~1.5]{LNM}. Recall that $G$ is a finite subgroup of the $k$-algebra automorphisms of $l$ and that $k=l^G$. Let us write $m$ for the cardinality of $G$. Every $G$-Galois field extension is a $G$-Galois extension of commutative rings. Here is another example: 
\begin{example}
Let $k \subset l$ be a $G$-Galois field extension of number fields, \ie finite field extensions of $\bbQ$. Then, the corresponding extension $\cO_k \subset \cO_l$ of the rings of integers is a $G$-Galois extension of commutative rings if and only if $k \subset l$ is unramified; consult \cite[Thm.~4.1]{LNM} for details.
\end{example}
Given a dg category $\cA$, let us denote by $\cA_l$ the dg category obtained from $\cA$ by tensoring each complex of morphisms with $l$. Throughout the article all dg categories (\eg\ $\cA_l$) will be defined over the base commutative ring $k$. Note that $\cA_l$ comes equipped with an action of $G$ and with a canonical dg functor $\iota_\cA:\cA \to  \cA_l$. Our main result is the following:
\begin{theorem}{(Galois descent)}\label{thm:main}\
Assume that the $k$-module\footnote{Recall from \cite[Thm.~1.6]{LNM} that $l$ is a finitely generated projective $k$-module.} $l$ is {\em stably-free}, \ie that there exist integers $r,s >0$ such that $l \oplus k^{\oplus r} \simeq k^{\oplus s}$. Let $n:=s-r>0$ and $E:\dgcat \to \mathsf{D}$ an additive invariant with values in an idempotent complete $\bbZ[1/nm]$-linear category. Under these assumptions, one has a canonical isomorphism
\begin{equation*}
E(\iota_\cA): E(\cA) \stackrel{\sim}{\too} E(\cA_l)^G
\end{equation*}
for every dg category $\cA$. This isomorphism is moreover natural in $\cA$ and in $E$.
\end{theorem}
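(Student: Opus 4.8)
The plan is to reduce the statement to a short manipulation in the category of noncommutative motives, where restriction of scalars along $k\subset l$ becomes an honest morphism. The two conditions of Definition~\ref{def:additive} are precisely what is required for $E$ to factor through the universal additive invariant $\Uadd\colon\dgcat\to\Madd$, say $E\simeq\overline{E}\circ\Uadd$ with $\overline{E}\colon\Madd\to\mathsf{D}$ additive; so it suffices to carry out the argument in $\Madd$ and then apply $\overline{E}$. Recall that in $\Madd$ the group $\Hom(\Uadd(\cB),\Uadd(\cC))$ is the Grothendieck group of the triangulated category of locally perfect $\cB$-$\cC$-bimodules, composition is derived tensor product of bimodules, and $\Uadd$ sends a dg functor to the class of its graph bimodule. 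The idempotent completeness of $\mathsf{D}$ will be used only at the end, to split a projector.

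Since $l$ is a finitely generated projective $k$-module, $\cA_l$ regarded as an $\cA_l$-$\cA$-bimodule via $\iota_\cA$ is locally perfect over $\cA$, hence defines a \emph{transfer} morphism $\tau_\cA\colon\Uadd(\cA_l)\to\Uadd(\cA)$; put $\mathrm{tr}:=\overline{E}(\tau_\cA)$. The technical core of the proof is the pair of identities
\[
\tau_\cA\circ\Uadd(\iota_\cA)=n\cdot\mathrm{id}_{\Uadd(\cA)}
\qquad\text{and}\qquad
\Uadd(\iota_\cA)\circ\tau_\cA=\textstyle\sum_{g\in G}\Uadd(g_\cA)
\]
in $\Madd$, where $g_\cA:=\mathrm{id}_\cA\otimes g\in\mathrm{Aut}(\cA_l)$ for $g\in G$. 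For the first, the composite is represented by the $\cA$-$\cA$-bimodule $\cA(-,-)\otimes_k l$, and the stable-freeness hypothesis $l\oplus k^{\oplus r}\simeq k^{\oplus s}$ forces its class in $K_0$ to be $(s-r)\,[\mathrm{id}_\cA]=n\,[\mathrm{id}_\cA]$. For the second, the composite is represented by the bimodule $\cA_l\otimes_\cA\cA_l$, and the defining property of a $G$-Galois extension --- that $x\otimes y\mapsto(x\,g(y))_{g\in G}$ is an isomorphism $l\otimes_k l\isoto\prod_{g\in G}l$ of $l$-bimodules --- gives, after applying $-\otimes_k\cA$, a $G$-equivariant direct sum decomposition of this bimodule into the graphs of the $g_\cA$'s. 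One finally records the elementary fact that $g_\cA\circ\iota_\cA=\iota_\cA$ for every $g$ (because $g$ fixes $k$), whence $E(\iota_\cA)$ is $G$-invariant.

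The proof is then completed by idempotent linear algebra in $\mathsf{D}$, noting that $n$ and $m$ are units in $\bbZ[1/nm]$ (as $1/n=m\cdot\tfrac1{nm}$ and $1/m=n\cdot\tfrac1{nm}$). Put $f:=E(\iota_\cA)$, $t:=\mathrm{tr}$, and $N:=\sum_{g\in G}E(g_\cA)\in\End(E(\cA_l))$; applying $\overline{E}$ to the two identities gives $t\circ f=n\cdot\mathrm{id}_{E(\cA)}$ and $f\circ t=N$. The object $E(\cA_l)^G$ is the image of the idempotent $e:=\tfrac1m N$ (this makes sense since $m$ is invertible and $\mathsf{D}$ is idempotent complete; that $e$ is idempotent uses $E((gh)_\cA)=E(g_\cA)E(h_\cA)$); write $i\colon E(\cA_l)^G\hookrightarrow E(\cA_l)$ and $p\colon E(\cA_l)\twoheadrightarrow E(\cA_l)^G$ for its structure maps, so $pi=\mathrm{id}$ and $ip=e$. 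By $G$-invariance $ef=f$, so $f$ corestricts to $\overline{f}:=pf\colon E(\cA)\to E(\cA_l)^G$ with $i\overline{f}=f$. Setting $s:=\tfrac1n\,ti\colon E(\cA_l)^G\to E(\cA)$, one computes $s\circ\overline{f}=\tfrac1n\,t(i\overline{f})=\tfrac1n\,tf=\mathrm{id}_{E(\cA)}$ and $\overline{f}\circ s=\tfrac1n\,p(ft)i=\tfrac1n\,pNi=\tfrac{m}{n}\,(pi)(pi)=\tfrac{m}{n}\,\mathrm{id}_{E(\cA_l)^G}$, a unit multiple of the identity; hence $\overline{f}$ has a left and a right inverse and is an isomorphism. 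This is the asserted $E(\iota_\cA)\colon E(\cA)\isoto E(\cA_l)^G$. Naturality in $\cA$ is automatic, since every bimodule and isomorphism used is natural in $\cA$, and naturality in $E$ follows from that of the factorization through $\Uadd$.

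I expect the main obstacle to be the second of the two displayed identities: one must verify with care that the Auslander--Goldman / Chase--Harrison--Rosenberg characterization of a $G$-Galois extension translates, after applying $-\otimes_k\cA$ and passing to $\Madd$, into a genuinely $G$-\emph{equivariant} splitting of $\cA_l\otimes_\cA\cA_l$ into the graphs of the $g_\cA$'s, so that the resulting sum of morphisms in $\Madd$ is precisely $\sum_{g}\Uadd(g_\cA)$ and is compatible with the averaging idempotent $e$. By contrast, the first identity uses only stable-freeness and additivity of $K_0$, and the concluding idempotent computation is elementary once $n$ and $m$ are invertible.
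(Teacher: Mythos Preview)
Your proposal is correct and follows essentially the same approach as the paper: factor $E$ through the universal additive invariant, construct the transfer from the restriction-of-scalars bimodule, establish the two identities $\tau\circ U(\iota)=n\cdot\id$ and $U(\iota)\circ\tau=\sum_\sigma U(\sigma)$, and finish with the idempotent computation. The only organizational difference is that the paper first proves the two identities for $\underline{k}\subset\underline{l}$ (Propositions~\ref{prop:key2} and \ref{prop:key1}) and then tensors with $\cA$ via the symmetric monoidal structure on $\Hmo_0$, whereas you work directly with $\cA$; also, your concern in the final paragraph about $G$-equivariance of the splitting is unnecessary---only the bimodule isomorphism (for the sum formula) and the separate fact $g_\cA\circ\iota_\cA=\iota_\cA$ (for $ef=f$) are used.
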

Recall from \cite[page~68]{Weibel} that when $k$ is a field, a local ring, or a principal ideal domain (PID), every finitely generated projective $k$-module is (stably-)free. Moreover, in the particular case where $k \subset l$ is a $G$-Galois field extension, one has $r=0$, $s=m$, and consequently $n=m$. Intuitively speaking, Theorem~\ref{thm:main} shows us that every additive invariant satisfies Galois descent as long as one inverts the rank of the $k$-module $l$ as well as the cardinality of $G$. The proof is based on the recent theory of noncommutative motives; consult \S\ref{sec:NCmotives}-\ref{sec:proof} for details. 
\begin{corollary}\label{cor:main1}
Let $A$ be a (dg) $k$-algebra and $X$ a quasi-compact separated $k$-scheme. When $\mathrm{char}(k) \nmid nm$, one has canonical natural isomorphisms
$$
\begin{array}{rcl}
C(A) \simeq C(A_l)^G && C(X) \simeq C(X_l)^G  \label{eq:descent00}\\
HH(A) \simeq HH(A_l)^G && HH(X) \simeq HH(X_l)^G  \label{eq:descent11}\\
HC(A) \simeq HC(A_l)^G && HC(X) \simeq HC(X_l)^G  \label{eq:descen12}\\
HP(A) \simeq HP(A_l)^G && HP(X) \simeq HP(X_l)^G   \label{eq:descent13}\\
HN(A) \simeq HN(A_l)^G && HN(X) \simeq HN(X_l)^G  \label{eq:descent14}\,,
\end{array}
$$
where $A_l:=A\otimes_k l$ and $X_l:=X \times_{\mathrm{Spec}(k)} \mathrm{Spec}(l)$. By passing to the associated homology groups one obtains 
$$
\begin{array}{rcl}
HH_\ast(A) \simeq HH_\ast(A_l)^G && HH_\ast(X) \simeq HH_\ast(X_l)^G  \nonumber\\
HC_\ast(A) \simeq HC_\ast(A_l)^G && HC_\ast(X) \simeq HC_\ast(X_l)^G   \nonumber\\
HP_\ast(A) \simeq HP_\ast(A_l)^G && HP_\ast(X) \simeq HP_\ast(X_l)^G    \nonumber\\
HN_\ast(A) \simeq HN_\ast(A_l)^G && HN_\ast(X) \simeq HN_\ast(X_l)^G   \nonumber\,.
\end{array}
$$
\end{corollary}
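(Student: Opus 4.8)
The plan is to deduce everything from Theorem~\ref{thm:main}, applied in turn to the additive invariants $C$, $HH$, $HC$, $HP$, $HN$ of Examples (i)--(ii), and then to specialise the resulting isomorphism $E(\iota_\cA)\colon E(\cA)\isoto E(\cA_l)^G$ first to $\cA=\underline{A}$ and then to $\cA=\cD_\perf^\dg(X)$.

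First I would check the hypotheses of Theorem~\ref{thm:main}. Each of $C$, $HH$, $HC$, $HP$, $HN$ is an additive invariant (Examples (i)--(ii)), with values in $\cD(\Lambda)$, $\cD(k)$, $\cD(k)$, $\cD_{\bbZ/2}(k)$, and $\cD(k)$ respectively; these triangulated categories are idempotent complete, and when $nm$ is a unit in $k$ --- which is what the hypothesis $\mathrm{char}(k)\nmid nm$ is meant to guarantee --- they are $\bbZ[1/nm]$-linear (note $k\subseteq\Lambda=k[B]/B^2$). Hence Theorem~\ref{thm:main} applies and yields, for every dg category $\cA$, a canonical isomorphism $E(\cA)\isoto E(\cA_l)^G$ natural in $\cA$.

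Next I would unwind the two cases. For a (dg) $k$-algebra $A$ one has $\underline{A}_l=\underline{A\otimes_k l}=\underline{A_l}$, so taking $\cA=\underline{A}$ produces $E(\underline{A})\isoto E(\underline{A_l})^G$; by the Remark following the examples, $E(\underline{A})$ and $E(\underline{A_l})$ coincide with the classical invariants $E(A)$ and $E(A_l)$, whence $E(A)\simeq E(A_l)^G$. For a quasi-compact separated $k$-scheme $X$, base change along the finite flat morphism $X_l=X\times_{\Spec(k)}\Spec(l)\to X$ supplies a Morita equivalence $\big(\cD_\perf^\dg(X)\big)_l\simeq\cD_\perf^\dg(X_l)$, compatible with the $G$-actions (the one on the left coming from functoriality in $l$, the one on the right from the action of $G$ on $X_l$ over $X$) and carrying $\iota_\cA$, for $\cA=\cD_\perf^\dg(X)$, to pullback along $X_l\to X$; since $E$ inverts Morita morphisms and, again by the Remark, $E(\cD_\perf^\dg(X))\simeq E(X)$ and $E(\cD_\perf^\dg(X_l))\simeq E(X_l)$, taking $\cA=\cD_\perf^\dg(X)$ gives $E(X)\simeq E(X_l)^G$. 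This establishes the first block of isomorphisms for $E\in\{C,HH,HC,HP,HN\}$; naturality in $A$ and in $X$ is inherited from Theorem~\ref{thm:main} together with the functoriality of the constructions involved.

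Finally, for the statements on homology groups I would apply the homology functor $H_\ast$ (valued in graded, resp.\ $\bbZ/2$-graded, $k$-modules). Being additive, $H_\ast$ carries the averaging idempotent $\frac1m\sum_{g\in G}E(g)$ on $E(\cA_l)$ to the averaging idempotent on $E_\ast(\cA_l)$, hence carries its image $E(\cA_l)^G$ to $E_\ast(\cA_l)^G$; this produces $HH_\ast(A)\simeq HH_\ast(A_l)^G$ and the remaining identities. The only step that is not pure bookkeeping is the $G$-equivariant Morita identification $\big(\cD_\perf^\dg(X)\big)_l\simeq\cD_\perf^\dg(X_l)$, a base-change statement for perfect complexes; everything else reduces to quoting Theorem~\ref{thm:main}, the list of examples, and the Remark.
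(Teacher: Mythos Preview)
Your argument is correct and matches the paper's own proof essentially line for line: the paper verifies that $\cD(\Lambda)$, $\cD(k)$, $\cD_{\bbZ/2}(k)$ are idempotent complete $k$-linear (hence $\bbZ[1/nm]$-linear under the characteristic hypothesis), invokes Theorem~\ref{thm:main}, and for schemes uses the Morita equivalence $\cD_\perf^\dg(X)\otimes_k\underline{l}\simeq\cD_\perf^\dg(X)\otimes_k\cD_\perf^\dg(\Spec(l))\simeq\cD_\perf^\dg(X_l)$ (citing \cite[Prop.~6.2]{Regularity}, extended from fields to the present setting via flatness of $l$ over $k$). You are in fact slightly more explicit than the paper in two places---the $G$-equivariance of this Morita identification and the passage to homology groups via preservation of the averaging idempotent---both of which the paper leaves to the reader.
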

Let $(\Ho(\Spt)_{1/nm})^\natural$ be the idempotent completion of the localization $\Ho(\Spt)_{1/nm}$ of $\Ho(\Spt)$ at the $\bbZ[1/nm]$-linear stable equivalences and 
\begin{equation}\label{eq:loc}
(-)_{1/nm}: \Ho(\Spt) \too \Ho(\Spt)_{1/nm} \subset (\Ho(\Spt)_{1/nm})^\natural
\end{equation}
the associated localization functor. For every spectrum $S$ one has a canonical isomorphism $\pi_\ast(S_{1/nm}) \simeq \pi_\ast(S)_{1/nm}:= \pi_\ast(S) \otimes_\bbZ \bbZ[1/nm]$. Furthermore, by composing the above functors (iii)-(vii) with \eqref{eq:loc} one obtains additive invariants with values in an idempotent complete $\bbZ[1/nm]$-linear additive category.
\begin{corollary}\label{cor:main2}
For every (dg) $k$-algebra $A$ and for every quasi-compact separated $k$-scheme $X$ one has canonical natural isomorphisms
$$
\begin{array}{rcl}
K(A)_{1/nm} \simeq (K(A_l)_{1/nm})^G && K(X)_{1/nm} \simeq (K(X_l)_{1/nm})^G  \\
K(A;\bbZ/l)_{1/nm} \simeq (K(A_l;\bbZ/l)_{1/nm})^G && K(X;\bbZ/l)_{1/nm} \simeq (K(X_l;\bbZ/l)_{1/nm})^G  \\
\bbK(A)_{1/nm} \simeq (\bbK(A_l)_{1/nm})^G &&  \bbK(X)_{1/nm} \simeq (\bbK(X_l)_{1/nm})^G   \\
KH(A)_{1/nm} \simeq (KH(A_l)_{1/nm})^G &&  KH(X)_{1/nm} \simeq (KH(X_l)_{1/nm})^G   \\
THH(A)_{1/nm} \simeq (THH(A_l)_{1/nm})^G   && THH(X)_{1/nm} \simeq (THH(X_l)_{1/nm})^G  \label{eq:descent3}\\
TC(A)_{1/nm} \simeq (TC(A_l)_{1/nm})^G  && TC(X)_{1/nm} \simeq (TC(X_l)_{1/nm})^G  \label{eq:descent4}\,.
\end{array}
$$
By passing to the associated homotopy groups one obtains
$$
\begin{array}{rcl}
K_\ast(A)_{1/nm} \simeq (K_\ast(A_l)_{1/nm})^G && K_\ast(X)_{1/nm} \simeq (K_\ast(X_l)_{1/nm})^G  \\
K_\ast(A;\bbZ/l)_{1/nm} \simeq (K_\ast(A_l;\bbZ/l)_{1/nm})^G && K_\ast(X;\bbZ/l)_{1/nm} \simeq (K_\ast(X_l;\bbZ/l)_{1/nm})^G  \\
\bbK_\ast(A)_{1/nm} \simeq (\bbK_\ast(A_l)_{1/nm})^G &&  \bbK_\ast(X)_{1/nm} \simeq (\bbK_\ast(X_l)_{1/nm})^G   \\
KH_\ast(A)_{1/nm} \simeq (KH_\ast(A_l)_{1/nm})^G &&  KH_\ast(X)_{1/nm} \simeq (KH_\ast(X_l)_{1/nm})^G   \\
THH_\ast(A)_{1/nm} \simeq (THH_\ast(A_l)_{1/nm})^G   && THH_\ast(X)_{1/nm} \simeq (THH_\ast(X_l)_{1/nm})^G  \\
TC_\ast(A)_{1/nm} \simeq (TC_\ast(A_l)_{1/nm})^G  && TC_\ast(X)_{1/nm} \simeq (TC_\ast(X_l)_{1/nm})^G  \,.
\end{array}
$$
\end{corollary}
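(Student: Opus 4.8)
The statement follows from Theorem~\ref{thm:main} applied to suitable invariants, so the plan is to prove that theorem first; I expect essentially all the work to lie in one computation inside the category of noncommutative motives. \emph{Reduction to the universal case.} Let $\Uadd\colon\dgcat\to\Madd$ be the universal additive invariant and let $\cU\colon\dgcat\to\cM$ be its composition with the $\bbZ[1/nm]$-linearization and idempotent completion of $\Madd$, so that $\cU$ is the universal additive invariant valued in an idempotent complete $\bbZ[1/nm]$-linear additive category; every such invariant $E$ therefore factors as $\overline E\circ\cU$ with $\overline E$ additive. In such a category the $G$-fixed object of a $G$-action is the image of the averaging idempotent $\tfrac1m\sum_{g\in G}g$ (this uses $1/m$ and idempotent completeness), and additive functors preserve images of idempotents; hence $\overline E$ carries $\cU(\cA_l)^G$ to $E(\cA_l)^G$ and $\cU(\iota_\cA)$ to $E(\iota_\cA)$, and it suffices to treat $E=\cU$. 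Since moreover $\cU$ is symmetric monoidal for $\otimes_k$, since $\cA_l\simeq\cA\otimes_k\underline l$ $G$-equivariantly (trivial action on $\cA$, with $\iota_\cA$ corresponding to $\id_\cA\otimes\iota_{\uk}$), and since $\cU(\cA)\otimes(-)$ is additive and hence commutes with the averaging idempotent, I would reduce further to the case $\cA=\uk$: namely, to showing that $\cU(\iota_{\uk})\colon\mathbf 1=\cU(\uk)\too\cU(\underline l)^G$ is an isomorphism.

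\emph{Transfer maps.} Recall that $\Hom_{\Madd}(\Uadd(\cA),\Uadd(\cB))\cong K_0(\rep(\cA,\cB))$, with composition induced by the derived tensor product of bimodules and with a dg functor sent to the class of its graph bimodule; I will work with the resulting maps in $\cM$. Thus $\cU(\iota_{\uk})$ is the class of the $(\uk,\underline l)$-bimodule $l$, and, because $l$ is a finitely generated projective $k$-module, the $(\underline l,\uk)$-bimodule $l$ lies in $\rep(\underline l,\uk)$ and gives a transfer $\mathrm{tr}\colon\cU(\underline l)\to\cU(\uk)$. I would then compute the two composites. The composite $\mathrm{tr}\circ\cU(\iota_{\uk})$ is the class of the $(\uk,\uk)$-bimodule $l\otimes_l l\cong l$, i.e. $[l]\in K_0(\perf k)=K_0(k)$; and since $l\oplus k^{\oplus r}\cong k^{\oplus s}$ one has $[l]=(s-r)[k]=n\cdot[k]$, so $\mathrm{tr}\circ\cU(\iota_{\uk})=n\cdot\mathrm{id}$. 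The composite $\cU(\iota_{\uk})\circ\mathrm{tr}$ is the class of the $(\underline l,\underline l)$-bimodule $l\otimes_k l$; the defining property of a $G$-Galois extension gives a bimodule decomposition $l\otimes_k l\cong\bigoplus_{g\in G}{}_g l$ in which the $g$-th summand is the graph bimodule of the automorphism $\cU(g)$, whence $\cU(\iota_{\uk})\circ\mathrm{tr}=\sum_{g\in G}\cU(g)$.

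\emph{Conclusion and main obstacle.} Granting these two identities, the rest is formal. In $\cM$, where $n$ and $m$ are invertible, $\tfrac1n\mathrm{tr}$ is a retraction of $\cU(\iota_{\uk})$, and $\cU(\iota_{\uk})\circ\tfrac1n\mathrm{tr}=\tfrac1n\sum_g\cU(g)=\tfrac mn\,e$ with $e:=\tfrac1m\sum_g\cU(g)$ the averaging idempotent, whose image is $\cU(\underline l)^G$. As $\iota_{\uk}$ is $G$-equivariant for the trivial action on $\uk$, one has $e\circ\cU(\iota_{\uk})=\cU(\iota_{\uk})$, so $\cU(\iota_{\uk})$ factors as $\cU(\uk)\xrightarrow{\iota'}\cU(\underline l)^G\hookrightarrow\cU(\underline l)$; writing $p,s$ for the splitting of $e$ ($ps=\mathrm{id}$, $sp=e$), one checks $(\tfrac1n\mathrm{tr}\circ s)\circ\iota'=\mathrm{id}$ and $\iota'\circ(\tfrac1n\mathrm{tr}\circ s)=\tfrac mn\,\mathrm{id}$, both invertible, so $\iota'$ is an isomorphism; naturality in $\cA$ and in $E$ is built into the construction. (In fact $n=m$, as $\mathrm{rank}_k l=m$, but this is not needed.) The hard part will be the two composite computations — precisely where the $K_0$-of-bimodules description of morphisms between noncommutative motives and the Galois identity $l\otimes_k l\cong\bigoplus_g{}_g l$ are genuinely used, and where one must carefully check that the transfer bimodule is perfect over $k$ and that the Galois decomposition is bimodule-linear.

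\emph{Deduction of Corollary~\ref{cor:main2}.} Each of $K$, $K(-;\bbZ/l)$, $\bbK$, $\KH$, $\THH$, $TC$ is an additive invariant $\dgcat\to\Ho(\Spt)$, and post-composing with \eqref{eq:loc} produces an additive invariant valued in the idempotent complete $\bbZ[1/nm]$-linear category $(\Ho(\Spt)_{1/nm})^\natural$. Applying Theorem~\ref{thm:main} to these invariants with $\cA=\uA$, respectively with $\cA=\cD_{\perf}^{\dg}(X)$ — using the $G$-equivariant Morita equivalence $\cD_{\perf}^{\dg}(X_l)\simeq\cD_{\perf}^{\dg}(X)_l$ and the identification of these invariants on $\uA$ and on $\cD_{\perf}^{\dg}(X)$ with the classical ones — yields the displayed spectrum-level isomorphisms. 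Finally, $\pi_\ast(S_{1/nm})\cong\pi_\ast(S)_{1/nm}$ and, $G$ being finite with $m$ invertible, $(-)^G$ is the retract cut out by the averaging idempotent, so $\pi_\ast$ commutes with it; this gives the homotopy-group isomorphisms.
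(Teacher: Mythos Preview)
Your proposal is correct and follows essentially the same route as the paper. The bulk of your write-up reproves Theorem~\ref{thm:main} via exactly the paper's mechanism---the transfer bimodule ${}_{\id}\mathsf{B}_{\underline\iota}$, the identities $\mathrm{tr}\circ\cU(\iota_{\uk})=n\cdot\id$ (from $[l]=n[k]$ in $K_0(k)$) and $\cU(\iota_{\uk})\circ\mathrm{tr}=\sum_{g}\cU(g)$ (from the Galois isomorphism $l\otimes_k l\cong\prod_g{}_g l$), followed by the averaging-idempotent argument---and your deduction of Corollary~\ref{cor:main2} (post-compose with \eqref{eq:loc}, apply the theorem to $\underline A$ and to $\cD_\perf^\dg(X)$ using the Morita equivalence $\cD_\perf^\dg(X)_l\simeq\cD_\perf^\dg(X_l)$, then pass to $\pi_\ast$) is precisely what the paper does, the latter being the one-line content of the paper's own proof of the corollary.
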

The isomorphism $HH_\ast(A) \simeq HH_\ast(A_l)^G$ was established by Geller-Weibel \cite{GW} in the particular case of a {\em commutative} $k$-algebra $A$. Under the assumption that $\mathrm{char}(k)=0$, the isomorphism $HC_\ast(A) \simeq HC_\ast(A_l)^G$ was also proved in \cite{GW}. In what concerns algebraic $K$-theory and $G$-Galois field extensions, the isomorphism $K_\ast(X)_{1/n}\simeq (K_\ast(X_l)_{1/n})^G$ goes back to Thomason's foundational work~\cite{Thomason}. A homotopy version of Galois descent for topological cyclic homology of {\em commutative} $k$-algebras was established by Tsalidis~\cite{T1}. Besides these examples in the ``commutative world'', all the remaining isomorphisms of Corollaries~\ref{cor:main1} and \ref{cor:main2} (and of Theorem~\ref{thm:main}) are, to the best of the author's knowledge, new in the literature. 

\medbreak\noindent\textbf{Acknowledgments:} The author is very grateful to Henri Gillet and John Rognes for their comments on a previous version of this article, and to the anonymous referee for all his comments, suggestions and corrections that greatly allowed the improvement of the article.
\section{Background on dg categories}\label{sec:dg}
Let $\cC(k)$ be the category of cochain complexes of $k$-modules; we use cohomological notation. A {\em differential graded (=dg) category $\cA$} is a category enriched over $\cC(k)$ (morphisms sets $\cA(x,y)$ are complexes) in such a way that composition fulfills the Leibniz rule $d(f \circ g) =d(f) \circ g +(-1)^{\mathrm{deg}(f)}f \circ d(g)$. A {\em dg functor} $F:\cA\to \cB$ is  a functor enriched over $\cC(k)$; consult Keller's ICM survey \cite{ICM-Keller} for further details.
\subsection*{Modules}
Let $\cA$ be a dg category. Its {\em opposite} dg category $\cA^\op$ has the same objects and complexes of morphisms given by $\cA^\op(x,y):=\cA(y,x)$. A {\em left} (resp. {\em right}) {\em $\cA$-module} $M$ is a dg functor $M:\cA\to \cC(k)_\dg$ (resp. $M:\cA^\op \to \cC_\dg(k)$) with values in the dg category $\cC_\dg(k)$ of cochain complexes of $k$-modules. Let us denote by $\cC(\cA)$ the category of right $\cA$-modules; see \cite[\S2.3]{ICM-Keller}. Recall from \cite[\S3.2]{ICM-Keller} that the {\em derived category $\cD(\cA)$ of $\cA$} is the localization of $\cC(\cA)$ with respect to the class of objectwise quasi-isomorphisms. Its full subcategory of compact objects (see \cite[Def.~4.2.7]{Neeman}) will be denoted by $\cD_c(\cA)$.
\subsection*{Morita morphisms}
A dg functor $F:\cA\to \cB$ is called a {\em Morita morphism} if the restriction of scalars functor $\cD(\cB) \stackrel{\sim}{\to} \cD(\cA)$ is an equivalence of (triangulated) categories; see \cite[\S4.6]{ICM-Keller}. As proved in \cite[Thm.~5.3]{IMRN}, $\dgcat$ admits a Quillen model structure whose weak equivalences are precisely the Morita morphisms. Let us denote by $\Hmo$ the homotopy category hence obtained.
\subsection*{Products}\label{sub:tensor}
The {\em cartesian product $\cA\times \cB$} (resp. {\em tensor product $\cA\otimes_k\cB$}) of two dg categories $\cA$ and $\cB$ is defined as follows: the set of objects is the cartesian product of the sets of objects of $\cA$ and $\cB$ and the complexes of morphisms are given by $(\cA\times \cB)((x,z),(y,w)):= \cA(x,y) \times \cB(z,w)$ (resp. by $(\cA\otimes_k\cB)((x,z),(y,w)):= \cA(x,y) \otimes_k \cB(z,w)$). As explained in \cite[\S2.3]{ICM-Keller}, the tensor product gives rise to a symmetric monoidal structure on $\dgcat$ with $\otimes$-unit the dg category $\underline{k}$. After deriving it $-\otimes^\bbL-$, this symmetric monoidal structure descends to $\Hmo$; consult \cite[\S4.2-4.3]{ICM-Keller} for details.
\subsection*{Bimodules}\label{sub:bimodules}
Let $\cA$ and $\cB$ be two dg categories. An {\em $\cA\text{-}\cB$-bimodule $\mathsf{B}$} is a dg functor $\mathsf{B}:\cA \otimes^\bbL_k \cB^\op\to \cC_\dg(k)$, \ie a right $(\cA^\op \otimes^\bbL_k \cB)$-module. A standard example is given by the $\cA\text{-}\cA$-bimodule
\begin{eqnarray}\label{eq:bimodule1}
\cA(-,-):\cA\otimes^\bbL_k\cA^\op \too \cC_\dg(k) && (x,y) \mapsto \cA(y,x)\,.
\end{eqnarray} 
Let us denote by $\rep(\cA,\cB)$ the full triangulated subcategory of $\cD(\cA^\op \otimes^\bbL_k \cB)$ consisting of those $\cA\text{-}\cB$-bimodules $\mathsf{B}$ such that for every object $x$ of $\cA$ the right $\cB$-module $\mathsf{B}(x,-)$ belongs to $\cD_c(\cB)$. Associated to a dg functor $F:\cA \to \cB$ we have the $\cA\text{-}\cB$-bimodule
\begin{eqnarray*}
{}_F\mathsf{B}_{\id}:\cA\otimes^\bbL_k \cB^\op \too \cC_\dg(k) && (x,z) \mapsto \cB(z,F(x))
\end{eqnarray*}
as well as the $\cB\text{-}\cA$-bimodule
\begin{eqnarray*}
{}_{\id}\mathsf{B}_F:\cB \otimes^\bbL_k \cA^\op \too \cC_\dg(k) && (z,x) \mapsto \cB(F(x),z)\,.
\end{eqnarray*}
Clearly, ${}_F\mathsf{B}_{\id}$ belongs to $\rep(\cA,\cB)$. In contrast, ${}_{\id}\mathsf{B}_F$ belongs to $\rep(\cB,\cA)$ if and only if for every $z \in \cB$ the right $\cA$-module $x \mapsto \cB(F(x),z)$ belongs to $\cD_c(\cA)$.
\begin{remark}\label{rk:key}
Given a dg category $\cD$ and a dg functor $F:\cA\to \cB$, we have an equality ${}_{\id}\mathsf{B}_{(\id \otimes^\bbL F)} = \cD(-,-)\otimes^\bbL_k {}_{\id} \mathsf{B}_F$ of $(\cD\otimes^\bbL_k \cB) \text{-}(\cD\otimes^\bbL_k \cA)$-bimodules, where $\id \otimes F$ stands for the dg functor obtained by tensoring $\cD$ with $F$.
\end{remark}
\section{Algebraic $K$-theories}\label{sec:K-ths}
\subsection*{Mod-$l$ algebraic $K$-theory}
Consider the following triangle in $\Ho(\Spt)$
$$ \bbS \stackrel{\cdot l}{\too} \bbS \too \bbS/l \too \Sigma (\bbS)\,,$$
where $l \ge 2$ is an integer, $\bbS$ is the sphere spectrum, and $\cdot l$ is the $l$-fold multiple of the identity morphism. Following Browder \cite{Browder} (and Karoubi), the {\em mod-$l$ algebraic $K$-theory functor} is defined as
\begin{eqnarray}\label{eq:mod}
K(-;\bbZ/l): \dgcat \too \Ho(\Spt) && \cA \mapsto K(\cA) \wedge^\bbL \bbS/l\,.
\end{eqnarray}
Since connective algebraic $K$-theory is an additive invariant and the functor $-\wedge^\bbL \bbS/l$ is additive, one observes that \eqref{eq:mod} is also an additive invariant.
\subsection*{Homotopy algebraic $K$-theory}
Consider the following simplicial $k$-algebra 
\begin{equation*}
\Delta_n:= k[t_0, \ldots, t_n]/ \big(\sum_i t_i -1\big) \qquad n \geq 0\,,
\end{equation*}
with faces and degenerancies given by the formulae
\begin{eqnarray*}
\partial_r(t_j) := \left\{ \begin{array}{lcr}
t_j & \text{if} & j <r \\
0 & \text{if} & j =r \\
t_{j-1} & \text{if} & j > r \\
\end{array} \right.
&
&
\delta_r(t_j) := \left\{ \begin{array}{lcr}
t_j & \text{if} & j <r \\
t_j + t_{j+1} & \text{if} & j =r \\
t_{j+1} & \text{if} & j > r \\
\end{array} \right.\,.
\end{eqnarray*}
The {\em homotopy algebraic $K$-theory functor} is defined as 
\begin{eqnarray}\label{eq:homotopy}
KH:\dgcat \too \Ho(\Spt) && \cA \mapsto \mathrm{hocolim}_n \,\bbK(\cA \otimes_k \underline{\Delta_n})\,.
\end{eqnarray}
\begin{proposition}
The above functor \eqref{eq:homotopy} is an additive invariant.
\end{proposition}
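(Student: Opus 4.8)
The plan is to reduce both conditions of Definition~\ref{def:additive} to the fact that nonconnective algebraic $K$-theory $\bbK$ is already an additive invariant (item~(v) above, due to Schlichting), using two standard properties of the homotopy colimit of simplicial spectra: it is homotopy invariant, \ie sends levelwise stable equivalences to stable equivalences, and, being itself a homotopy colimit, it commutes with the direct sum of two simplicial spectra.

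First I would verify condition~(i). Since each $\Delta_n$ is a free $k$-module, the functor $-\otimes_k\underline{\Delta_n}$ agrees with the derived tensor product $-\otimes^\bbL_k\underline{\Delta_n}$, which descends to $\Hmo$ (see~\S\ref{sec:dg}); in particular it carries Morita morphisms to Morita morphisms. Hence, given a Morita morphism $F\colon\cA\to\cB$, every $\bbK(F\otimes_k\underline{\Delta_n})$ is an isomorphism, and therefore so is $KH(F)=\hocolim_n\bbK(F\otimes_k\underline{\Delta_n})$, the homotopy colimit being homotopy invariant.

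For condition~(ii) the key observation is the canonical isomorphism of dg categories
$$T(\cA)\otimes_k\cC\;\simeq\;T(\cA\otimes_k\cC)\,,$$
natural in both variables and compatible with the inclusion dg functors $i_1,i_2$: by inspection of the definitions of $T(-)$ and of $\otimes_k$, both sides have the objects of $T(\cA)\times\cC$, the morphism complex from $((i,x),c)$ to $((i',x'),c')$ being $\cA(x,x')\otimes_k\cC(c,c')$ when $i\geq i'$ and $0$ otherwise, with composition induced by that of $\cA$ and of $\cC$. Taking $\cC=\underline{\Delta_n}$ and applying the additivity of $\bbK$ levelwise, one gets isomorphisms
$$\bbK\big(T(\cA)\otimes_k\underline{\Delta_n}\big)\;\simeq\;\bbK\big(T(\cA\otimes_k\underline{\Delta_n})\big)\;\simeq\;\bbK(\cA\otimes_k\underline{\Delta_n})\oplus\bbK(\cA\otimes_k\underline{\Delta_n})$$
which are natural in $n$ — \ie compatible with the simplicial structure maps induced by the faces $\partial_r$ and degeneracies $\delta_r$ — because the additivity isomorphism for $\bbK$ is natural in the dg category; the second isomorphism is inverse to $[\bbK(i_1)\,\bbK(i_2)]$. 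Taking the homotopy colimit over $n$ and commuting it past the direct sum then produces an isomorphism $KH(T(\cA))\simeq KH(\cA)\oplus KH(\cA)$ which, after unwinding the identifications, is inverse to $[KH(i_1)\,KH(i_2)]$. Naturality of the whole construction in $\cA$ is clear, since each step is natural.

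I do not expect a genuine obstacle: once $\bbK$ is in hand as an additive invariant, the argument is formal. The only points meriting a little care are the naturality in $n$ of the additivity isomorphism for $\bbK$, and checking that the isomorphism $T(\cA)\otimes_k\cC\simeq T(\cA\otimes_k\cC)$ is indeed compatible with $i_1,i_2$ and with the simplicial operators — both of which are routine.
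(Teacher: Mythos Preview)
Your argument is correct and essentially identical to the paper's: both verify condition~(i) by noting that each $\Delta_n$ is $k$-flat (you say free, which is fine) so that $-\otimes_k\underline{\Delta_n}$ preserves Morita morphisms, and both verify condition~(ii) via the identification $T(\cA)\otimes_k\underline{\Delta_n}\simeq T(\cA\otimes_k\underline{\Delta_n})$, the additivity of $\bbK$ applied levelwise, and the commutation of $\hocolim$ with finite direct sums. Your write-up is a bit more explicit about the compatibility with $i_1,i_2$ and the simplicial structure, but the content is the same.
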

\begin{proof}
We start by verifying condition (i) of Definition \ref{def:additive}. Let $F:\cA \to \cB$ be a Morita morphism. Note that the $k$-algebras $\Delta_n, n \geq 0$, are $k$-flat. This implies that the dg functors $F\otimes \id : \cA \otimes_k \underline{\Delta_n} \to \cB \otimes_k \underline{\Delta_n}, n\geq 0$, are also Morita morphisms. Using the fact that the nonconnective algebraic $K$-theory functor satisfies condition (i) of Definition \ref{def:additive}, one hence obtains an induced isomorphism $KH(\cA) \stackrel{\sim}{\to} KH(\cB)$.

Let us now verify condition (ii). One needs to show that the inclusion dg functors $i_1, i_2: \cA \hookrightarrow T(\cA)$ induce an isomorphism
\begin{equation}\label{eq:induced}
\left(\mathrm{hocolim}_n\, \bbK(\cA\otimes_k \underline{\Delta_n})\right)^{\oplus 2}  \too \mathrm{hocolim}_n\, \bbK(T(\cA)\otimes_k \underline{\Delta_n})\,.
\end{equation} 
Under the canonical identification $T(\cA) \otimes_k \underline{\Delta_n} = T(\cA \otimes_k \underline{\Delta_n})$, the morphisms 
\begin{eqnarray*}
i_1 \otimes \id: \cA \otimes \underline{\Delta_n} \hookrightarrow T(\cA) \otimes_k \underline{\Delta_n} &\mathrm{and}& i_2 \otimes \id: \cA \otimes \underline{\Delta_n} \hookrightarrow T(\cA) \otimes_k \underline{\Delta_n}
\end{eqnarray*}
correspond to the two inclusions of $\cA\otimes_k \underline{\Delta_n}$ into $T(\cA\otimes_k \underline{\Delta_n})$. Moreover, since the left-hand-side of \eqref{eq:induced} identifies with $\mathrm{hocolim}_n\,\left(\bbK(\cA\otimes_k \underline{\Delta_n})^{\oplus 2}\right)$, the above induced morphism \eqref{eq:induced} reduces to 
\begin{eqnarray}\label{eq:morph-final}
\mathrm{hocolim}_n\, \left(\bbK(\cA\otimes_k \underline{\Delta_n})^{\oplus 2}\right) \too \mathrm{hocolim}_n\, \bbK(T(\cA\otimes_k \underline{\Delta_n}))\,.
\end{eqnarray}
Now, using the fact that the nonconnective algebraic $K$-theory functor satisfies condition (ii) of Definition \ref{def:additive}, one concludes that \eqref{eq:morph-final} is an isomorphism. This achieves the proof. 
\end{proof}
\section{Noncommutative motives}\label{sec:NCmotives}
In this section we recall from \cite{IMRN} the construction of the additive category $\Hmo_0$ of noncommutative motives; consult also the survey article \cite{survey}. This category, as well as its universal property, will play a key role in the proof of Theorem~\ref{thm:main}.

Let us consider first the homotopy category $\Hmo$. As proved in \cite[Cor.~5.10]{IMRN}, one has a canonical bijection $\Hom_{\Hmo}(\cA,\cB)\simeq \mathrm{Iso}\, \rep(\cA,\cB)$, where $\mathrm{Iso}$ denotes the set of isomorphism classes. Under this bijection, the composition law corresponds to the derived tensor product of bimodules
\begin{eqnarray*}
\mathrm{Iso}\, \rep(\cA,\cB) \times \mathrm{Iso}\, \rep(\cB,\cC) \too \mathrm{Iso}\, \rep(\cA,\cC) && (\mathsf{B},\mathsf{B}') \mapsto \mathsf{B} \otimes^{\bbL}_\cB \mathsf{B}'\,.
\end{eqnarray*}
Note that the identity of every $\cA \in \Hmo$ is given by the above $\cA\text{-}\cA$-bimodule \eqref{eq:bimodule1} and that we have a natural functor 
\begin{eqnarray}\label{eq:nat1}
\dgcat \too \Hmo && (F:\cA\to \cB) \mapsto {}_F\mathsf{B} _{\id} \,.
\end{eqnarray}
As explained in {\em loc. cit.}, the symmetric monoidal structure on $\dgcat$ descends to $\Hmo$ making \eqref{eq:nat1} into a symmetric monoidal functor. 

The {\em additivization of $\Hmo$} is the additive category $\Hmo_0$ with the same objects as $\Hmo$ and with abelian groups of morphisms given by $\Hom_{\Hmo_0}(\cA,\cB):=K_0\,\rep(\cA,\cB)$, where $K_0\, \rep(\cA,\cB)$ stands for the Grothendieck group of the triangulated category $\rep(\cA,\cB)$. The composition law is induced by the derived tensor product of bimodules
\begin{eqnarray*}
K_0\, \rep(\cA,\cB) \times K_0\, \rep(\cB,\cC) \too K_0\, \rep(\cA,\cC) && ([\mathsf{B}],[\mathsf{B}']) \mapsto [\mathsf{B} \otimes^{\bbL}_\cB \mathsf{B}']\,.
\end{eqnarray*}
Note that the products (=direct sums) in $\Hmo_0$ are given by the cartesian product of dg categories. Note also that we have a natural functor
\begin{eqnarray}\label{eq:nat2}
\Hmo \too \Hmo_0 && \mathsf{B} \mapsto [\mathsf{B}]\,.
\end{eqnarray}
As explained in {\em loc. cit.}, the symmetric monoidal structure on $\Hmo$ descends furthermore to a bilinear symmetric monoidal structure on $\Hmo_0$ making \eqref{eq:nat2} into a symmetric monoidal functor. As proved in \cite[Thms.~5.3 and 6.3]{IMRN}, the composition
$$ U:\dgcat \stackrel{\eqref{eq:nat1}}{\too} \Hmo \stackrel{\eqref{eq:nat2}}{\too} \Hmo_0$$
is the {\em universal additive invariant}, i.e. given any additive category $\mathsf{D}$ there is an induced equivalence of categories
\begin{equation}\label{eq:categories}
U^\ast: \Fun_{\add}(\Hmo_0,\mathsf{D}) \stackrel{\sim}{\too} \Fun_{\mathsf{A}}(\dgcat,\mathsf{D})\,,
\end{equation}
where the left-hand-side denotes the category of additive functors and the right-hand-side the category of additive invariants in the sense of Definition \ref{def:additive}. Because of this universal property, which is reminiscent from motives, $\Hmo_0$ is called the (additive\footnote{A triangulated version also exists in the literature; see \cite{Duke}.}) category of {\em noncommutative motives}. 
\section{Proof of Theorem~\ref{thm:main}}\label{sec:proof}
Let us denote by $\iota:k \to l$ the $G$-Galois extension of commutative rings and by $\underline{\iota}: \underline{k} \to \underline{l}$ the associated dg functor. 
\begin{lemma}\label{lem:key1}
The $\underline{l}\text{-}\underline{k}$-bimodule ${}_{\id} \mathsf{B}_{\underline{\iota}}$ belongs to $\rep(\underline{l},\underline{k})$. Consequently, it gives rise to a well-defined morphism $[{}_{\id} \mathsf{B}_{\underline{\iota}}]: U(\underline{l}) \to U(\underline{k})$ in $\Hmo_0$.
\end{lemma}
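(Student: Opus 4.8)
The plan is to reduce the membership statement to a single compactness assertion and then to feed in the structural fact, recalled in the footnote to Theorem~\ref{thm:main}, that $l$ is a finitely generated projective $k$-module. First I would unravel the definitions. Since $\underline{k}$ and $\underline{l}$ each have a single object, the $\underline{l}\text{-}\underline{k}$-bimodule ${}_{\id}\mathsf{B}_{\underline{\iota}}$ is completely determined by the complex ${}_{\id}\mathsf{B}_{\underline{\iota}}(\ast,\ast)=\underline{l}(\underline{\iota}(\ast),\ast)=l$, equipped with its left $l$-module structure (coming from the $\underline{l}$-variable) and with the right $k$-module structure obtained by restriction of scalars along $\iota$ (coming from the $\underline{k}$-variable). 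Recall from \S\ref{sec:dg} that ${}_{\id}\mathsf{B}_{\underline{\iota}}$ lies in $\rep(\underline{l},\underline{k})$ precisely when, for every object $z$ of $\underline{l}$, the right $\underline{k}$-module $x\mapsto \underline{l}(\underline{\iota}(x),z)$ belongs to $\cD_c(\underline{k})$. As there is a single object, this amounts to requiring that $l$, viewed as an object of $\cD(\underline{k})\simeq \cD(k)$ via $\iota$, be compact.

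Next I would invoke that the compact objects of $\cD(k)$ are exactly the perfect complexes of $k$-modules (see \cite{Neeman}). By \cite[Thm.~1.6]{LNM} the $k$-module $l$ is finitely generated and projective, hence it is a perfect complex concentrated in degree $0$, and in particular a compact object of $\cD(k)$. This establishes the first assertion, namely ${}_{\id}\mathsf{B}_{\underline{\iota}}\in\rep(\underline{l},\underline{k})$.

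For the ``consequently'' clause I would appeal to the description of $\Hmo$ and $\Hmo_0$ recalled in \S\ref{sec:NCmotives}: one has $\Hom_{\Hmo}(\underline{l},\underline{k})\simeq \mathrm{Iso}\,\rep(\underline{l},\underline{k})$ and $\Hom_{\Hmo_0}(U(\underline{l}),U(\underline{k}))=K_0\,\rep(\underline{l},\underline{k})$. Consequently the object ${}_{\id}\mathsf{B}_{\underline{\iota}}$ of $\rep(\underline{l},\underline{k})$ represents a morphism $\underline{l}\to\underline{k}$ in $\Hmo$, and its image under the functor $\Hmo\to\Hmo_0$ of \eqref{eq:nat2}, that is, its Grothendieck class $[{}_{\id}\mathsf{B}_{\underline{\iota}}]$, is a well-defined morphism $U(\underline{l})\to U(\underline{k})$ in $\Hmo_0$. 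The statement is essentially bookkeeping: the only genuine input is the cited fact that $l$ is a finitely generated projective --- hence perfect --- $k$-module, and the one point that requires care is simply keeping track of which of the two variables carries which module structure when checking the compactness condition from \S\ref{sec:dg}.
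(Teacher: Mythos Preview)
Your argument is correct and essentially identical to the paper's own proof: both unravel the bimodule as $l$ (concentrated in degree zero) with the evident left $l$- and right $k$-actions, and then invoke \cite[Thm.~1.6]{LNM} to conclude that $l$ is finitely generated projective over $k$, hence compact in $\cD(\underline{k})$. Your explicit unpacking of the ``consequently'' clause via \S\ref{sec:NCmotives} is slightly more detailed than the paper, which leaves that step implicit, but the substance is the same.
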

\begin{proof}
Note that the $\underline{l}\text{-}\underline{k}$-bimodule ${}_{\id} \mathsf{B}_{\underline{\iota}}$ is $l$ (considered as a complex of $k$-modules spaces concentrated in degree zero) endowed with the left multiplication by $l$ and with the right multiplication by $k$. Since by hypothesis $k \subset l$ is a $G$-Galois extension, $l$ is a finitely generated projective (right) $k$-module; see \cite[Thm.~1.6]{LNM}. This implies that $l$ is a compact object of the triangulated category $\cD(\underline{k})$ and consequently that ${}_{\id} \mathsf{B}_{\underline{\iota}}$ belongs to $\rep(\underline{l},\underline{k})$
\end{proof}
\begin{proposition}\label{prop:key2}
The morphisms $[{}_{\id}\mathsf{B}_{\underline{\iota}}]: U(\underline{l}) \to U(\underline{k})$ and $U(\underline{\iota}):U(\underline{k}) \to U(\underline{l})$ in $\Hmo_0$ verify the equality $[{}_{\id}\mathsf{B}_{\underline{\iota}}] \circ U(\underline{\iota})=n \cdot \id_{U(\underline{k})}$.
\end{proposition}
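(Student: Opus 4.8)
The plan is to compute the composite $[{}_{\id}\mathsf{B}_{\underline{\iota}}]\circ U(\underline{\iota})$ directly inside $\Hmo_0$, using the description recalled in Section~\ref{sec:NCmotives}: morphisms in $\Hmo_0$ are classes in the Grothendieck groups $K_0\,\rep(-,-)$, and composition is induced by the derived tensor product of bimodules. All that is needed is to unwind the two morphisms into bimodules, tensor them, and read off the resulting $K_0$-class.

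First I would identify the two morphisms. Via the composite functor \eqref{eq:nat1} followed by \eqref{eq:nat2}, the morphism $U(\underline{\iota})$ is the class of the bimodule ${}_{\underline{\iota}}\mathsf{B}_{\id}\in\rep(\underline{k},\underline{l})$; concretely this is $l$, with right $\underline{l}$-action by multiplication and left $\underline{k}$-action induced by $\iota$. By Lemma~\ref{lem:key1}, the morphism $[{}_{\id}\mathsf{B}_{\underline{\iota}}]$ is the class of ${}_{\id}\mathsf{B}_{\underline{\iota}}\in\rep(\underline{l},\underline{k})$, which is $l$ with left $\underline{l}$-action by multiplication and right $\underline{k}$-action induced by $\iota$. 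Hence, by the composition law in $\Hmo_0$, the composite $[{}_{\id}\mathsf{B}_{\underline{\iota}}]\circ U(\underline{\iota})$ is the class in $K_0\,\rep(\underline{k},\underline{k})$ of the derived tensor product ${}_{\underline{\iota}}\mathsf{B}_{\id}\otimes^\bbL_{\underline{l}}{}_{\id}\mathsf{B}_{\underline{\iota}}$.

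Next I would evaluate this tensor product. Since $l$ is free of rank one as a right $\underline{l}$-module, no derived correction occurs and ${}_{\underline{\iota}}\mathsf{B}_{\id}\otimes^\bbL_{\underline{l}}{}_{\id}\mathsf{B}_{\underline{\iota}}\simeq l\otimes_l l\simeq l$ as a $\underline{k}$-$\underline{k}$-bimodule, with both the left and the right $k$-action induced by $\iota$. Because $l$ is commutative, $\iota(k)$ lies in its center, so these two actions coincide with ordinary multiplication, and the resulting object of $\rep(\underline{k},\underline{k})$ is simply the $k$-module $l$. Using the canonical identification $\underline{k}^\op\otimes^\bbL_k\underline{k}\simeq\underline{k}$ (valid since $k$ is commutative), which identifies $\rep(\underline{k},\underline{k})$ with $\cD_c(k)$ and $K_0\,\rep(\underline{k},\underline{k})$ with $K_0(k)$, the composite $[{}_{\id}\mathsf{B}_{\underline{\iota}}]\circ U(\underline{\iota})$ therefore corresponds to the class $[l]\in K_0(k)$ of the finitely generated projective $k$-module $l$ (its being finitely generated projective is exactly what was invoked in Lemma~\ref{lem:key1}).

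Finally I would invoke the stably-free hypothesis: from $l\oplus k^{\oplus r}\simeq k^{\oplus s}$ one gets $[l]=[k^{\oplus s}]-[k^{\oplus r}]=(s-r)\,[k]=n\,[k]$ in $K_0(k)$. Since the identity morphism $\id_{U(\underline{k})}$ corresponds, via the bimodule \eqref{eq:bimodule1}, to the class $[\underline{k}(-,-)]=[k]$ of the free rank-one $k$-module, this yields $[{}_{\id}\mathsf{B}_{\underline{\iota}}]\circ U(\underline{\iota})=n\cdot\id_{U(\underline{k})}$, as asserted. The one point that calls for care is the bookkeeping in the middle step — verifying that after tensoring over $\underline{l}$ the two residual $k$-actions on $l$ really agree, so that one lands on the plain $k$-module $l$ in $\cD_c(k)$ rather than on some twisted $k$-$k$-bimodule; once this is settled, the remainder is a routine unwinding of the definitions of $\Hmo_0$ and of $K_0(k)$, together with the stable-freeness of $l$.
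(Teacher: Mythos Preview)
Your proposal is correct and follows essentially the same route as the paper: compute the composite as the class of ${}_{\underline{\iota}}\mathsf{B}_{\id}\otimes^\bbL_{\underline{l}}{}_{\id}\mathsf{B}_{\underline{\iota}}\simeq l$ in $K_0\,\rep(\underline{k},\underline{k})\simeq K_0(k)$, and then invoke stable-freeness to get $[l]=n\,[k]$. Your added remark that the two residual $k$-actions on $l$ coincide because $\iota(k)$ is central is exactly the point the paper leaves implicit when it writes ``endowed with the left and right multiplication by $k$''.
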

\begin{proof}
As explained in \S\ref{sec:NCmotives}, the composition
\begin{equation*}
U(\underline{k}) \stackrel{U(\underline{\iota})}{\too} U(\underline{l}) \stackrel{[{}_{\id}\mathsf{B}_{\underline{\iota}}]}{\too} U(\underline{k})
\end{equation*}
is given by $[{}_{\underline{\iota}} \mathsf{B}_{\id} \otimes^\bbL_{\underline{l}} {}_{\id} \mathsf{B}_{\underline{\iota}}] \in K_0\, \rep(\underline{k},\underline{k})$. Note that the $\underline{k}\text{-}\underline{k}$-bimodule ${}_\iota \mathsf{B}_{\id} \otimes^\bbL_{\underline{l}} {}_{\id} \mathsf{B}_{\underline{\iota}}$ is $l \otimes^\bbL_l l \simeq l \otimes_l l \simeq l$ (considered as a complex of $k$-modules concentrated in degree zero) endowed with the left and right multiplication by $k$. Recall from \S\ref{sec:NCmotives} that the identity $\id_{U(\underline{k})}$ of $U(\underline{k}) \in \Hmo_0$ is given by the class $[\underline{k}(-,-)]\in K_0\,\rep(\underline{k},\underline{k})$. 

Now, note that one has a canonical $\otimes$-equivalence of categories $\rep(\underline{k},\underline{k}) \simeq \cD_c(k)$. As a consequence, one obtains canonical ring isomorphisms
$$
K_0\, \rep(\underline{k},\underline{k}) \simeq K_0\cD(k) \simeq K_0(k)\,.
$$
Under these ring isomorphisms, $[{}_{\underline{\iota}} \mathsf{B}_{\id} \otimes^\bbL_{\underline{l}} {}_{\id} \mathsf{B}_{\underline{\iota}}]$ corresponds to $[l]$ and $[\underline{k}(-,-)]$ to $[k]$. Hence, it suffices to show the equality $[l] = n \cdot [k]$ in $K_0(k)$. Since by hypothesis $l$ is a stably-free $k$-module, there exist integers $r,s >0$ such that $l \oplus k^{\oplus r} \simeq k^{\oplus s}$. By construction of the Grothendieck group $K_0(k)$ we then obtain the equalities
$$ [l] = [k^{\oplus s}] - [k^{\oplus r}] = [k^{\oplus (s-r)}] = [k^{\oplus n}] = n \cdot [k]\,.$$
This achieves the proof.
\end{proof}
\begin{remark}
The proof of Proposition~\ref{prop:key2} shows that if $l$ is a stably-free $k$-module, then $[l]=n \cdot [k]$ for some integer $n >0$. As explained in \cite[page~68]{Weibel}, the converse is also true.
\end{remark}

%
%
%

Given an element $\sigma\in G$, let $\underline{\sigma}: \underline{l} \to \underline{l}$ be the associated dg functor.
\begin{proposition}\label{prop:key1}
The morphisms $[{}_{\id} \mathsf{B}_{\underline{\iota}}]: U(\underline{l}) \to U(\underline{k})$ and $U(\underline{\iota}):U(\underline{k}) \to U(\underline{l})$ in $\Hmo_0$ verify the equality $U(\underline{\iota}) \circ [{}_{\id} \mathsf{B}_{\underline{\iota}}] = \sum_{\sigma \in G} U(\underline{\sigma})$.
\end{proposition}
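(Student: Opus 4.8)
The plan is to trace the composite $U(\underline{\iota}) \circ [{}_{\id} \mathsf{B}_{\underline{\iota}}]$ through the explicit model for morphisms in $\Hmo_0$ recalled in \S\ref{sec:NCmotives}, exactly as in the proof of Proposition~\ref{prop:key2}, and then to invoke the defining property of a Galois extension. First I would observe that $U(\underline{\iota}) = [{}_{\underline{\iota}}\mathsf{B}_{\id}]$ and that, by the composition law in $\Hmo_0$, the morphism $U(\underline{\iota}) \circ [{}_{\id}\mathsf{B}_{\underline{\iota}}] \colon U(\underline{l}) \to U(\underline{l})$ is the class in $K_0\,\rep(\underline{l},\underline{l})$ of the derived tensor product ${}_{\id}\mathsf{B}_{\underline{\iota}} \otimes^\bbL_{\underline{k}} {}_{\underline{\iota}}\mathsf{B}_{\id}$. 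As in Lemma~\ref{lem:key1} and the proof of Proposition~\ref{prop:key2}, ${}_{\id}\mathsf{B}_{\underline{\iota}}$ is $l$ with its left $l$-action and right $k$-action, and ${}_{\underline{\iota}}\mathsf{B}_{\id}$ is $l$ with its left $k$-action and right $l$-action. Since $l$ is a finitely generated projective (hence flat) $k$-module, the derived tensor product agrees with the underived one, so this bimodule is $l \otimes_k l$ equipped with the standard $\underline{l}\text{-}\underline{l}$-bimodule structure: left multiplication on the first factor, right multiplication on the second.

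Next I would bring in the hypothesis that $k \subset l$ is a $G$-Galois extension, in the form: the $k$-linear map
$$ l \otimes_k l \too \prod_{\sigma \in G} l \ko a \otimes b \longmapsto (\sigma(a)\,b)_{\sigma \in G} $$
is an isomorphism (it is the composite of the flip with the standard isomorphism $a \otimes b \mapsto (a\,\sigma(b))_\sigma$ characterising Galois extensions; see \cite{chase} and \cite[\S0]{LNM}). Unwinding the bimodule conventions of \S\ref{sec:dg}, one checks that this map is $\underline{l}\text{-}\underline{l}$-bilinear when the $\sigma$-th factor $l$ on the right-hand side is given the bimodule structure of ${}_{\underline{\sigma}}\mathsf{B}_{\id}$ — namely left multiplication acting through $\sigma$ and right multiplication acting in the usual way, which is precisely the structure produced by the dg functor $\underline{\sigma}$. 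Hence $l \otimes_k l \simeq \bigoplus_{\sigma \in G} {}_{\underline{\sigma}}\mathsf{B}_{\id}$ in $\rep(\underline{l},\underline{l})$ (each summand lies in $\rep(\underline{l},\underline{l})$ since bimodules of the form ${}_F\mathsf{B}_{\id}$ always do). Passing to $K_0$ and using $U(\underline{\sigma}) = [{}_{\underline{\sigma}}\mathsf{B}_{\id}]$ for each $\sigma$, I obtain $U(\underline{\iota}) \circ [{}_{\id}\mathsf{B}_{\underline{\iota}}] = \sum_{\sigma \in G} U(\underline{\sigma})$, which is the desired equality in $\Hom_{\Hmo_0}(U(\underline{l}),U(\underline{l}))$.

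The only genuinely delicate step is the bookkeeping in the second paragraph: one must verify that the chosen isomorphism $l \otimes_k l \simeq \prod_{\sigma} l$ really is $l$-bilinear for the intended bimodule structures and that the $\sigma$-component matches ${}_{\underline{\sigma}}\mathsf{B}_{\id}$ rather than, say, ${}_{\id}\mathsf{B}_{\underline{\sigma}}$ or ${}_{\underline{\sigma^{-1}}}\mathsf{B}_{\id}$. However, since the sum ranges over all of $G$ and $\sigma \mapsto \sigma^{-1}$ is a bijection of $G$, even a "wrong" normalisation (for instance landing on ${}_{\id}\mathsf{B}_{\underline{\sigma}} \simeq {}_{\underline{\sigma^{-1}}}\mathsf{B}_{\id}$) yields the same final identity after reindexing, so this obstacle is a matter of care rather than of substance.
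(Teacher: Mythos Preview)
Your proof is correct and follows essentially the same route as the paper: compute the composite as the class of $l\otimes_k l$ in $K_0\,\rep(\underline{l},\underline{l})$, apply the Galois isomorphism $l\otimes_k l\simeq\prod_{\sigma\in G}l$ given by $a\otimes b\mapsto(\sigma(a)b)_\sigma$, identify the $\sigma$-component with ${}_{\underline{\sigma}}\mathsf{B}_{\id}$, and pass to $K_0$. Your extra care with the bilinearity check and the reindexing remark is welcome but not a deviation from the paper's argument.
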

\begin{proof}
As explained in \S\ref{sec:NCmotives}, the composition
$$ U(\underline{l}) \stackrel{[{}_{\id} \mathsf{B}_{\underline{\iota}}]}{\too} U(\underline{k}) \stackrel{U(\underline{\iota})}{\too} U(\underline{l})$$\
is given by $[{}_{\id} \mathsf{B}_{\underline{\iota}} \otimes^\bbL_{\underline{k}} {}_{\underline{\iota}} \mathsf{B}_{\id}] \in K_0 \, \rep(\underline{l},\underline{l})$. Note that the $\underline{l}\text{-}\underline{l}$-bimodule ${}_{\id} \mathsf{B}_{\underline{\iota}} \otimes^\bbL_{\underline{k}} {}_{\underline{\iota}} \mathsf{B}_{\id}$ is $l \otimes^\bbL_k l \simeq l \otimes_k l$ (considered as a complex of $k$-modules concentrated in degree zero) endowed with the left and right multiplication by $l$.

Now, recall from \cite[\S0 Def.~1.5]{LNM} that since by hypothesis $k \subset l$ is a $G$-Galois extension of commutative rings, we have the following isomorphism of $l$-algebras
\begin{eqnarray*}
\theta: l \otimes_k l \stackrel{\sim}{\too} \prod_{\sigma \in G} l && b_1 \otimes b_2 \mapsto (b_1 b_2, \ldots, \sigma(b_1) b_2, \ldots )\,,
\end{eqnarray*}
where $l$ acts on the right factor of $l \otimes_k l$. By $\sigma$-twisting the left (diagonal) action of $G$ on $\prod_{\sigma \in G} l$, we hence obtain  the following isomorphism of $\underline{l}\text{-}\underline{l}$-bimodules
\begin{equation}\label{eq:iso-bimodules}
{}_{\id} \mathsf{B}_{\underline{\iota}} \otimes^\bbL_{\underline{k}} {}_{\underline{\iota}} \mathsf{B}_{\id} \simeq l \otimes_k l \stackrel{\theta}{\simeq} \prod_{\sigma \in G} {}_{\underline{\sigma}}\mathsf{B}_{\id}\,.
\end{equation}
By construction of the Grothendieck group we have $[\prod_{\sigma \in G} {}_{\underline{\sigma}}\mathsf{B}_{\id}] = \sum_{\sigma \in G} [{}_{\underline{\sigma}}\mathsf{B}_{\id}]$ in $K_0\, \rep(\underline{l},\underline{l})$. Therefore, using the above isomorphism \eqref{eq:iso-bimodules} and the fact that $[{}_{\underline{\sigma}}\mathsf{B}_{\id}] = U(\underline{\sigma})$, we obtain the desired equality $U(\underline{\iota}) \circ [{}_{\id} \mathsf{B}_{\underline{\iota}}] = [{}_{\id} \mathsf{B}_{\underline{\iota}} \otimes^\bbL_{\underline{k}} {}_{\underline{\iota}} \mathsf{B}_{\id}] = \sum_{\sigma \in G} U(\underline{\sigma})$.
\end{proof}
By definition, $\cA_l$ identifies with the dg category $\cA\otimes_k\underline{l}$ (where $l$ is considered as a $k$-algebra) and $\iota_\cA$ with the dg functor $\id\otimes \underline{\iota}: \cA\simeq \cA\otimes_k \underline{k} \to \cA\otimes_k \underline{l}$. Note that since $l$ is a projective (and hence $k$-flat) $k$-module there is no need to take the derived tensor product. Consider the $(\cA\otimes_k \underline{l}) \text{-}\cA$-bimodule $\cA(-,-)\otimes^\bbL_k {}_{\id} \mathsf{B}_{\underline{\iota}}$. Thanks to Remark~\ref{rk:key} (with $\cD:=\cA$ and $F:=\underline{\iota}$), we have $\cA(-,-)\otimes^\bbL_k {}_{\id} \mathsf{B}_{\underline{\iota}}={}_{\id} \mathsf{B}_{\iota_{\cA}}$. Moreover, since the $\cA\text{-}\cA$-bimodule $\cA(-,-)$ is the identity of $\cA\in\Hmo$ and the category $\Hmo$ is symmetric monoidal, we conclude from Lemma~\ref{lem:key1} that ${}_{\id} \mathsf{B}_{\iota_{\cA}}$ belongs to $\rep(\cA\otimes_k \underline{l}, \cA)$. By applying the symmetric monoidal functor $U(-)$ to $\id \otimes \underline{\iota}$ and the symmetric monoidal functor \eqref{eq:nat2} to ${}_{\id} \mathsf{B}_{\iota_\cA}$, we hence obtain from Propositions~\ref{prop:key2} and \ref{prop:key1}, and from the bilinearity of the symmetric monoidal structure on $\Hmo_0$, the following equalities
\begin{eqnarray*}
[{}_{\id}\mathsf{B}_{\iota_\cA}] \circ U(\iota_\cA) = n \cdot \id_{U(\cA)} &\mathrm{and} & U(\iota_\cA) \circ [{}_{\id} \mathsf{B}_{\iota_\cA}] = \sum_{\sigma \in G} U(\sigma_\cA)\,,
\end{eqnarray*}
where $\sigma_\cA$ stands for the dg functor $\id \otimes \underline{\sigma}: \cA\otimes_k \underline{l} \to \cA\otimes_k \underline{l}$.

We now have all the ingredients needed for the conclusion of the proof of Theorem~\ref{thm:main}. By hypothesis, $E:\dgcat \to \mathsf{D}$ is an additive invariant. Therefore, thanks to the above equivalence of categories \eqref{eq:categories}, there exists an additive functor $\overline{E}$ making the following diagram commutative
\begin{equation}\label{eq:diagram}
\xymatrix{
\dgcat \ar[d]_-U \ar[r]^-E & \mathsf{D} \\
\Hmo_0\ar[ur]_-{\overline{E}}& \,.
}
\end{equation}
We hence obtain well-defined morphisms
\begin{eqnarray*}
E(\iota_\cA): E(\cA)\too E(\cA\otimes_k \underline{l}) & & \overline{E}([{}_{\id} \mathsf{B}_{\iota_\cA}]): E(\cA\otimes_k \underline{l}) \too E(\cA)
\end{eqnarray*}
verifying the equalities
\begin{eqnarray}\label{eq:equalities}
\overline{E}([{}_{\id} \mathsf{B}_{\iota_\cA}])\circ E(\iota_\cA)=n \cdot \id_{E(\cA)} &\mathrm{and}& E(\iota_\cA) \circ \overline{E}([{}_{\id} \mathsf{B}_{\iota_\cA}])= \sum_{\sigma \in G}E(\sigma_\cA)\,.
\end{eqnarray}
Since by hypothesis the additive category $\mathsf{D}$ is idempotent complete and $\bbZ[1/m]$-linear, the $G$-invariant part $E(\cA\otimes_k \underline{l})^G$ of $E(\cA\otimes_k \underline{l})$ exists and is given by the image of the idempotent endomorphism $\frac{1}{m} \sum_{\sigma \in G} E(\sigma_\cA)$ of $E(\cA\otimes_k \underline{l})$. We obtain then a well-defined inclusion morphism $\mathrm{inc}:E(\cA\otimes_k \underline{l})^G \hookrightarrow E(\cA\otimes_k \underline{l})$ such that $E(\sigma_\cA) \circ \mathrm{inc}=\mathrm{inc}$ for every $\sigma \in G$. Note that the following equalities
$$ \left(\frac{1}{m} \sum_{\sigma \in G} E(\sigma_\cA)\right)\circ E(\iota_\cA)= \frac{1}{m} \sum_{\sigma\in G}E(\sigma_\cA \circ \iota_\cA) = \frac{1}{m} \sum_{\sigma\in G}E(\iota_\cA)= E(\iota_\cA)$$
imply that $E(\iota_\cA)$ factors through $\mathrm{inc}$. Let us now show that the composition and pre-composition of $E(\iota_\cA)$ with 
\begin{equation*}
 \frac{1}{m}\left(E(\cA\otimes_k \underline{l})^G \stackrel{\mathrm{inc}}{\hookrightarrow} E(\cA\otimes_k \underline{l}) \stackrel{\overline{E}([{}_{\id} \mathsf{B}_{\iota_\cA}])}{\too} E(\cA)\right)
\end{equation*}
is an isomorphism. Using the left-hand-side of \eqref{eq:equalities} and the $\bbZ[1/m]$-linearity of $\mathsf{D}$ we obtain the following equalities
\begin{equation*}
\frac{1}{m}\left(\overline{E}([{}_{\id} \mathsf{B}_{\iota_\cA}]\right) \circ \mathrm{inc}) \circ E(\iota_\cA) = \frac{1}{m}\cdot (n \cdot \id_{E(\cA)})=  \frac{n}{m} \cdot \id_{E(\cA)} \,.
\end{equation*}
On the other hand, by combining the equality $E(\sigma_\cA)\circ \mathrm{inc}=\mathrm{inc}$ with the right-hand-side of \eqref{eq:equalities}, we obtain
\begin{eqnarray*}
E(\iota_\cA) \circ \frac{1}{m}\left(\overline{E}([{}_{\id} \mathsf{B}_{\iota_\cA}]) \circ \mathrm{inc}\right) = \frac{1}{m}\cdot \left(m \cdot \id_{E(\cA\otimes_k \underline{l})}\right) = \id_{E(\cA\otimes_k \underline{l})^{G}} \,.
\end{eqnarray*}
The above two morphisms are isomorphisms. The first one follows from the fact that by hypothesis the category $\mathsf{D}$ is also $\bbZ[1/n]$-linear. The second one is the identity morphism. This allow us to conclude that $E(\iota_\cA)$ is also an isomorphism.

Finally, it remains only to show that the canonical isomorphism $E(\iota_\cA)$ is natural in $\cA$ and in $E$. This follows automatically from the following commutative diagrams
$$
\xymatrix{
\cA \ar[d]_-F \ar[r]^-\simeq & \cA\otimes_k \underline{k} \ar[d]^-{F\otimes \id} \ar[r]^-{\id \otimes \underline{\iota}} & \cA\otimes_k \underline{l} \ar[d]^-{F \otimes \id} \ar[r]^-{\id \otimes \underline{\sigma}} & \cA \otimes_k \underline{l} \ar[d]^-{F \otimes \id}\\
\cB \ar[r]_-{\simeq} & \cB \otimes_k \underline{k} \ar[r]_-{\id \otimes \underline{\iota}} & \cB \otimes_k \underline{l} \ar[r]_-{\id \otimes \underline{\sigma}} & \cB\otimes_k \underline{l} \,
}
$$
$$
\xymatrix{
E(\cA) \ar[r]^-\simeq \ar[d]_-{\eta_\cA} & E(\cA\otimes_k \underline{k}) \ar[rr]^-{E(\id \otimes\underline{\iota})} \ar[d]^-{\eta_{\cA\otimes_k \underline{k}}} && E(\cA \otimes_k \underline{l}) \ar[d]^-{\eta_{\cA\otimes_k \underline{l}}} \ar[rr]^-{E(\id \otimes \underline{\sigma})} && E(\cA \otimes_k \underline{l}) \ar[d]^-{\eta_{\cA\otimes_k \underline{l}}} \\
E'(A) \ar[r]_-\simeq & E'(\cA\otimes_k \underline{k}) \ar[rr]_{E'(\id \otimes \underline{\iota})} && E'(\cA \otimes_k \underline{l}) \ar[rr]_-{E'(\id \otimes \underline{\sigma})}&&  E'(\cA\otimes_k \underline{l})\,,
}
$$
where $F$ is any dg functor, $\sigma$ any element of $G$, and $\eta:E\Rightarrow E'$ any natural transformation between additive invariants. This concludes the proof of Theorem~\ref{thm:main}.
\section{Remaining proofs}
\subsection*{Corollary \ref{cor:main1}}

The idempotent complete categories $\cD(\Lambda), \cD(k)$ and $\cD_{\bbZ/2}(k)$ are $k$-linear. Hence, when $\mathrm{char}(k)\nmid nm$, they are in particular $\bbZ[1/nm]$-linear.

In what concerns quasi-compact separated $k$-schemes $X$, the right-hand-side isomorphisms of Corollary~\ref{cor:main1} follow from the following Morita morphisms
\begin{eqnarray}
\cD_\perf^\dg(X) \otimes_k \underline{l} & \simeq & \cD_\perf^\dg(X) \otimes_k \cD_\perf^\dg(\mathrm{Spec}(l)) \nonumber\\
& \simeq & \cD_\perf^\dg(X \times_{\mathrm{Spec}(k)} \mathrm{Spec}(l)) \label{eq:2} \\
& = &\cD_\perf^\dg(X_l)\,. \nonumber
\end{eqnarray}
The above Morita morphism \eqref{eq:2} was established in \cite[Prop.~6.2]{Regularity} in the case where $k$ is a field. However, since $l$ is a projective (and hence $k$-flat) $k$-module, the same proof holds.

\subsection*{Corollary \ref{cor:main2}}
The same argument, concerning quasi-compact separated $k$-schemes, applies.

\end{document}